\theoremstyle{plain}
\newtheorem{definition}{Definition}
\newtheorem{theorem}[definition]{Theorem}
\newtheorem{conjecture}[definition]{Conjecture}
\newtheorem{problem}[definition]{Problem}
\newtheorem{example}{Example}
\newcommand{\lef}{\mathcal{L}}
\newcommand{\pre}{\mathcal{P}}
\newcommand{\n}{\mathcal{N}}
\newcommand{\ri}{\mathcal{R}}
\newcommand{\ti}{\mathcal{T}} 
\newcommand{\gfr}{G_F^{SR}}
\newcommand{\gfl}{G_F^{SL}}
\newcommand{\plus}{+_{\ell}}
\begin{document}
\title{Pirates and Treasure}
\author{Fraser Stewart\\
\small{Xi'An Jiaotong University, Department of Mathematics and Statistics, Xi'An, China}\\
\texttt{fraseridstewart@gmail.com}}
\date{}

\maketitle{}

\begin{abstract}
In this paper we introduce a new game; in this game there are two players, who play as rival pirate gangs.  The goal is to gather more treasure than your rival.  The game is played on a graph and a player gathers treasure by moving to an unvisited vertex.  At the end of the game, the player with the most treasure wins.  

We will show that this game is NP-Hard, and we will also look at the structure of this game under the disjunctive sum.  We will show that there are cases where this game behaves like a normal play game, and cases where it behaves like a mis\`ere play game.  We then leave an open problem about scoring play games in general.
\end{abstract}

\section{Introduction}

Coin-sliding games have been studied for many years, one of the best known is the game Geography.  This is a simple game that parents often tell their children to play during long car journeys.  The idea is that a person says the name of a country, and the next person must name a country whose first letter is the same as the last letter of the country just named.  For example, Britain, Norway, Yugoslavia, America, Argentina, Australia and so on.

The generalised version of this game is played on a directed graph, and players take it in turns to move a coin to a previously unvisited neighbour.  The game ends when a player cannot move, and the last player to move is the winner.

Another game, simply titled ``The Coin-Sliding Game'', was introduced in David Moews' paper \cite{dm}.  This is a game where the players have coins of various values, that are placed on a vertical strip.  The player then chooses to either move a coin down the strip, or remove one of his opponents coins.  

The players collect coins that are removed from the strip, either by sliding them off it, or from removing them.  At the end of the game the players add the values of the coins that they have collected, and the player who has the most wins.  In his paper, David gave the solution to this game.

Our game is played on an undirected graph, where players have multiple coins placed on various vertices of the graph.  These coins represent their pirate ships.  The remaining vertices are given a numerical value, and players gather points (treasure) by moving onto those vertices.  The player who gathers the most points (treasure) wins.

\vspace{0.5cm}

The formal rules are given as follows:

\begin{enumerate}
\item{The game is played on a finite simple graph, defined arbitrarily before the game begins.  Left has $n$ ships, and Right has $m$ ship.}
\item{Each ship has a pre-defined starting vertex.}
\item{Every node is numbered to indicate how much treasure there is at that node, the players starting vertices are not numbered.}
\item{On a player's turn he moves to an adjacent, unvisited vertex.  The number of points he gets, corresponds to the number on that vertex. A player may not move a previously visited vertex, including the starting vertices.}
\item{The game ends when it is a player's turn and he is not adjacent to an unvisited vertex.}
\item{The player who gathers the most treasure wins.}
\end{enumerate}

In this paper, first we will be examining the complexity of this game, and then showing that there are variations of it which are comparable with normal and mis\`ere play.

\subsection{Scoring Play Combinatorial Game Theory}

Scoring play combinatorial games have not been studied anywhere near as much as their normal, and misere play counter parts.  The first papers were written by Milnor and Hanner \cite{oh, jm}.  There are also a papers by Jeff Ettinger \cite{je1, je2}, neither of which were ever published.  Will Johnson did some follow up work subsequently \cite{wj}.

All of them studied well-tempered scoring play games, that is games where the game always lasts a fixed number of moves.  However, in 2011 Fraser Stewart introduced the most general theory for scoring play games \cite{fs}.  This work was done entirely independently of Milnor, Hanner and Johnson, and is based on the theories of Elwyn Berlkeamp, John Conway and Richard Guy \cite{ww, onag}.

The idea behind this theory is very simple, consider the game tree given in figure \ref{gt}.

\begin{figure}[htb]
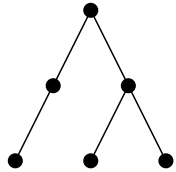


\begin{center}
\begin{graph}(2,2)

\roundnode{2}(0,0)\roundnode{3}(0.5,1)\roundnode{4}(1,2)
\roundnode{5}(1.5,1)\roundnode{6}(1,0)\roundnode{7}(2,0)

\edge{2}{3}\edge{3}{4}
\edge{4}{5}\edge{5}{6}\edge{5}{7}

\end{graph}
\end{center}

\caption{A typical game tree.}\label{gt}
\end{figure}

On a typical game tree as shown in figure \ref{gt}, the nodes represent the positions of a game, and edges represent possible moves for both players from those positions.  Left sloping edges are Left's moves, and right sloping edges are Right's moves.

A scoring play game tree is exactly the same, but for one difference, the nodes now have numbers on them which represent the score associated with that position.  The score is the difference between Left's total points, and Right's total points, at that point in the game.

\begin{figure}[htb]
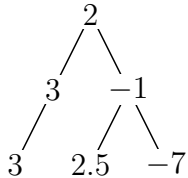


\begin{center}
\begin{graph}(2,2)

\roundnode{2}(0,0)\roundnode{3}(0.5,1)\roundnode{4}(1,2)
\roundnode{5}(1.5,1)\roundnode{6}(1,0)\roundnode{7}(2,0)

\edge{2}{3}\edge{3}{4}
\edge{4}{5}\edge{5}{6}\edge{5}{7}

\nodetext{2}{3}\nodetext{3}{3}\nodetext{4}{2}
\nodetext{5}{$-1$}\nodetext{6}{2.5}\nodetext{7}{$-7$}

\end{graph}
\end{center}

\caption{A scoring play game tree.}
\end{figure}

Formally scoring play games are defined as follows.

\begin{definition} A scoring play game $G=\{G^L|G^S|G^R\}$, where $G^L$ and $G^R$ are sets of games and $G^S\in\mathbb{R}$, the base case for the recursion is any game $G$ where $G^L=G^R=\emptyset$.\\

\noindent $G^L=\{\hbox{All games that Left can move to from } G\}$\\
\noindent $G^R=\{\hbox{All games that Right can move to from } G\}$,\\

and for all $G$ there is an $S=(P,Q)$ where $P$ and $Q$ are the number of points that Left and Right have on $G$ respectively.  Then $G^S=P-Q$, and for all $g^L\in G^L$, $g^R\in G^R$, there is a $p^L,p^R\in\mathbb{R}$ such that $g^{LS}=G^S+p^L$ and $g^{RS}=G^S+p^R$.

\end{definition}

By convention, we will take $G^S$ to be 0, unless stated otherwise.  This is simply to give games a ``default'' setting, i.e. if we don't know what $G^S$ is then it is natural to simply let it be 0.  We also write $\{.|G^S|.\}$ as $G^S$, e.g. $\{\{.|3|.\}|4|\{.|2|.\}\}$ would be written as $\{3|4|2\}$.  This simply for convenance and ease of reading.

For these games we also need the idea of a ``final score''.  That is the best possible score that both players can get when they move first.  Formally, this is defined as follows.

\begin{definition}  We define the following:

\begin{itemize}
\item{$\gfl$ is called the Left final score, and is the maximum score --when Left moves first on $G$-- at a terminal position on the game tree of $G$, if both Left and Right play perfectly.}
\item{$\gfr$ is called the Right final score, and is the minimum score --when Right moves first on $G$-- at a terminal position on the game tree of $G$, if both Left and Right play perfectly.}
\end{itemize}

\end{definition}

Our game Pirates and Treasure clearly falls under this theory, so we will be referencing it --and using it-- throughout the paper.

The paper ``Scoring Play Combinatorial Game Theory'' \cite{fs} discusses the structure of these games under the disjunctive sum, which is defined below.  In this paper it is shown that these games do not form a group, there is no non-trivial identity, and almost no games that can be compared in the usual sense.  However, these games are partially ordered under the disjunctive sum, and do form equivalence classes with a canonical form.  The games can also be reduced using the usual rules of domination and reversibility.

\begin{definition}  The disjunctive sum is defined as follows:
$$G\plus H=\{G^L\plus H,G\plus H^L|G^S+H^S|G^R\plus H,G\plus H^R\},$$
\noindent where $G^S+H^S$ is the normal addition of two real numbers.
\end{definition}

We abuse notation by letting $G^L$ and $G^R$ represent the set of options and the individual options themselves.  The reader will also notice that we have used $\plus$ and $+$.  This is to distinguish between the addition of games (the disjunctive sum), and the addition of scores.

\begin{definition} We define the following:
\begin{itemize}
\item{$-G=\{-G^R|-G^S|-G^L\}$.}
\item{For any two games $G$ and $H$, $G=H$ if $G\plus X$ has the same outcome as $H\plus X$ for all games $X$.}
\item{For any two games $G$ and $H$, $G\geq H$ if $H\plus X\in O$ implies $G\plus X\in O$, where $O=L_\geq$, $R_\geq$, $L_>$ or $R_>$, for all games $X$.}
\item{For any two games $G$ and $H$, $G\leq H$ if $H\plus X\in O$ implies $G\plus X\in O$, where $O=L_\leq$, $R_\leq$, $L_<$ or $R_<$, for all games $X$.}
\item{$G\cong H$ means $G$ and $H$ have identical game trees.}
\item{$G\approx H$ means $G$ and $H$ have the same outcome.}
\end{itemize}
\end{definition}  

The reason that we need this, is because our game naturally splits up into multiple smaller components that are played independently of one another.  So, the disjunctive sum is the natural operator to use when analysing this game.

Finally we need to define the outcome classes of games.  Before we can define what the outcome classes are precisely, we first need the following definition.

\begin{definition}
\item{$L_>=\{G|\gfl>0\}$, $L_<=\{G|\gfl<0\}$, $L_= =\{G|\gfl=0\}$.}
\item{$R_>=\{G|\gfr>0\}$, $R_<=\{G|\gfr<0\}$, $R_= =\{G|\gfr=0\}$.}
\item{$L_\geq=L_>\cup L_=$, $L_\leq = L_<\cup L_=$, $R_\geq=R_>\cup R_=$, $L_\leq = R_<\cup R_=$.}
\end{definition}

Next we can use this to give the definition of outcome classes for scoring play games.  Note that scoring play games, unlike normal and mis\`ere play games have five outcome classes.

\begin{definition}
The outcome classes of scoring games are defined as follows:

\begin{itemize}
\item{$\lef=(L_>\cap R_>)\cup(L_>\cap R_=)\cup(L_=\cap R_>)$}
\item{$\ri=(L_<\cap R_<)\cup(L_<\cap R_=)\cup(L_=\cap R_<)$}
\item{$\n=L_>\cap R_<$}
\item{$\pre=L_<\cap R_>$}
\item{$\ti =L_=\cap R_=$}
\end{itemize}
\end{definition}

\subsection{An Example}

In this section we will give an example of Pirates and Treasure, so that the reader has a better idea for how this game is played.  Consider the game shown in figure \ref{ex}, this figure shows a typical Pirates and Treasure position, as well as, a sequence of moves that the players could make.  

In the diagrams $L$ represents Left's current position, $R$ Right's current position, and the numbers represent the amount of treasure at that vertex.  The number in brackets will represent the current score, we also change numbered nodes to non-numbered nodes once they have been visited.  This indicates that the pirate has gathered all of the available treasure at that particular node.

\begin{figure}[htb]
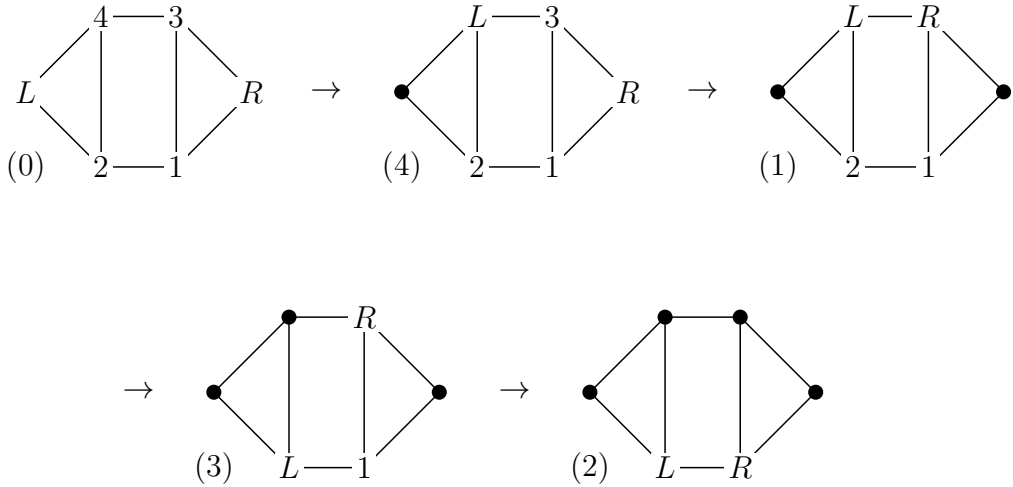

\begin{center}
\begin{graph}(13,6)

\roundnode{1}(0,5)\roundnode{2}(1,6)\roundnode{3}(1,4)
\roundnode{4}(2,6)\roundnode{5}(2,4)\roundnode{6}(3,5)

\edge{1}{2}\edge{1}{3}\edge{2}{4}\edge{3}{5}
\edge{4}{6}\edge{5}{6}\edge{2}{3}\edge{4}{5}

\nodetext{1}{$L$}\nodetext{2}{4}\nodetext{3}{2}
\nodetext{4}{3}\nodetext{5}{1}\nodetext{6}{$R$}

\freetext(0,4){$(0)$}

\roundnode{7}(5,5)\roundnode{8}(6,6)\roundnode{9}(6,4)
\roundnode{10}(7,6)\roundnode{11}(7,4)\roundnode{12}(8,5)

\edge{7}{8}\edge{7}{9}\edge{8}{10}\edge{9}{11}
\edge{10}{12}\edge{11}{12}\edge{8}{9}\edge{10}{11}

\nodetext{8}{$L$}\nodetext{9}{2}
\nodetext{10}{3}\nodetext{11}{1}\nodetext{12}{$R$}

\freetext(5,4){$(4)$}

\roundnode{13}(10,5)\roundnode{14}(11,6)\roundnode{15}(11,4)
\roundnode{16}(12,6)\roundnode{17}(12,4)\roundnode{18}(13,5)

\edge{13}{14}\edge{13}{15}\edge{14}{16}\edge{15}{17}
\edge{16}{18}\edge{17}{18}\edge{14}{15}\edge{16}{17}

\nodetext{14}{$L$}\nodetext{15}{2}
\nodetext{16}{$R$}\nodetext{17}{1}

\freetext(10,4){$(1)$}

\roundnode{19}(2.5,1)\roundnode{20}(3.5,2)\roundnode{21}(3.5,0)
\roundnode{22}(4.5,2)\roundnode{23}(4.5,0)\roundnode{24}(5.5,1)

\edge{19}{20}\edge{19}{21}\edge{20}{22}\edge{21}{23}
\edge{22}{24}\edge{23}{24}\edge{20}{21}\edge{22}{23}

\nodetext{21}{$L$}
\nodetext{22}{$R$}\nodetext{23}{1}

\freetext(2.5,0){$(3)$}

\roundnode{25}(7.5,1)\roundnode{26}(8.5,2)\roundnode{27}(8.5,0)
\roundnode{28}(9.5,2)\roundnode{29}(9.5,0)\roundnode{30}(10.5,1)

\edge{25}{26}\edge{25}{27}\edge{26}{28}\edge{27}{29}
\edge{28}{30}\edge{29}{30}\edge{26}{27}\edge{28}{29}

\nodetext{27}{$L$}
\nodetext{29}{$R$}

\freetext(7.5,0){$(2)$}

\freetext(4,5){$\rightarrow$}\freetext(9,5){$\rightarrow$}\freetext(1.5,1){$\rightarrow$}\freetext(6.5,1){$\rightarrow$}

\end{graph}
\end{center}
\caption{An example of Pirates and Treasure}\label{ex}
\end{figure}

In  this particular example, $G^{SL}_F=G^{SR}_F=2$.  Note that the winner is not related to who moves last.  If Left moves first then Right moves last, but Left wins.  If Right moves first then Left moves last, but again, Left wins.

\section{Complexity}

As always when studying a new game --or problem-- like this, the very first question we ask as a matter-of-course is ``how hard is it?''.  This is a very important question, and as we well show it is NP-hard to determine the final score of this game, and it remains NP-hard for various types of graph.

\vspace{0.5cm}

\noindent Problem: \textbf{Hamiltonian Path}

\vspace{0.5cm}

\noindent Instance: A Graph $G=(V, E)$.

\vspace{0.5cm}

\noindent Question: Does $G$ contain a Hamiltonian Path?

\vspace{0.5cm}

This problem has been shown to be NP-complete \cite{rk}.  We will be doing a reduction from Hamiltonian Path to our problem.

We define our problem as follows:

\vspace{0.5cm}

\noindent Problem: \textbf{Pirates and Treasure}

\vspace{0.5cm}

\noindent Instance: A Graph $G=(V, E)$, weight $w(v)\in \mathbb{Z}^+$ for each $v\in V$, specified vertices $L$ and $R$.

\vspace{0.5cm}

\noindent Question: Can Left win moving first on $G$?

\begin{theorem}\label{red}
Pirates and treasure is NP-hard.
\end{theorem}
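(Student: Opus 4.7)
The plan is to reduce from Hamiltonian Path. Given an instance $G = (V, E)$ with $|V| = n$, I would build a Pirates and Treasure instance $G'$ as follows: adjoin Left's starting vertex $L$ and join it by new edges to every vertex of $V$; in a separate component, adjoin Right's starting vertex $R$ sitting at one end of a fresh path $R - v_1 - v_2 - \cdots - v_{n-1}$ on $n-1$ new vertices; assign weight $1$ to every non-starting vertex. This construction is clearly polynomial in $|V| + |E|$.

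The point of the construction is that $G'$ splits into two connected components whose vertex sets never interact: Left is confined to $\{L\} \cup V$ (her first move takes her into $V$, after which she walks on the edges of $G$), while Right is confined to the length-$(n-1)$ path. Because all weights equal $1$, each player's optimal strategy is simply to visit as many vertices as possible in her own component. Right will visit all $b = n - 1$ of her vertices by traversing the path. Left, by choosing her first move out of $L$, can begin a longest simple path of $G$ at any vertex she likes; she therefore visits $a$ vertices of $V$, where $a$ is the number of vertices on a longest simple path of $G$. In particular $a = n$ iff $G$ has a Hamiltonian path, and otherwise $a \leq n - 1$.

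It remains to convert these move counts into a statement about the final score, and this is where the small amount of real work lies. Since moves alternate starting with Left and the game terminates the instant the player to move has no legal move, a brief case analysis on whether Left or Right is the first to run out shows that the final score equals $(b+1) - b = 1$ when $a \geq b + 1$ and equals $0$ when $a \leq b$. With $b = n - 1$, this gives $\gfl > 0$ precisely when $a = n$, i.e., precisely when $G$ has a Hamiltonian path. Combined with the polynomial-time construction, this proves NP-hardness. The only subtle point I would guard against is a player somehow stalling or deliberately ending the game early, but this cannot happen: moves are compulsory whenever available, and the two components are genuinely disjoint, so no ``waiting move'' in the opponent's component is possible, and each player's optimal behaviour in her own component is independent of what the other does.
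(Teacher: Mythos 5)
Your reduction is correct, and its core idea is the same as the paper's: reduce from Hamiltonian Path, give every treasure vertex value $1$, and confine Right to a path whose length acts as a move counter, so that Left wins (by exactly one point, since the game halts the moment the player to move is stuck) if and only if Left can traverse a simple path through all of $G$. Your score analysis --- final score $1$ when Left has at least $b+1$ moves available and $0$ otherwise, because Right's moves along the path are forced --- is sound. The differences are in the construction details. The paper attaches the path $P$ to $G$ itself, placing Left at the shared end vertex and Right next to her, and argues that neither player can cross into the other's territory because the adjoining vertices are already visited; you instead make Right's path a disjoint component and introduce $L$ as a brand-new vertex joined to \emph{every} vertex of $V$. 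Your universal vertex buys something real: the paper picks a vertex $L$ of $G$ arbitrarily, so its argument actually detects a Hamiltonian path \emph{starting at} $L$ rather than an arbitrary Hamiltonian path, a gap your construction closes since Left may begin her walk at any vertex of $V$. The trade-off is that the paper's construction preserves structure (adding a pendant path keeps a planar or grid graph planar or grid-like), which is exactly what it reuses in the follow-up theorem on restricted graph classes, whereas joining a new vertex to all of $V$ destroys planarity and so would not extend to that result.
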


\begin{proof}  To do this reduction we first start with a graph $G$ and pick a vertex $L$ on $G$.  The vertex $L$ will be the starting position of player Left.  We then give all vertices on $G$ value 1.

We add a path $P$ to $G$, such that $L$ is one of the end vertices.  We then choose the vertex $R$ on $P$ that is adjacent to $L$, and let it be the starting position of player Right.  $P$ is chosen such that $|P|=|V|$.  The reason we choose it this way is to ensure that Left can win only by visiting every vertex of the graph $G$.  That is, Right has $|V|-2$ (subtracting the vertices $L$ and $R$) vertices he can visit, while Left has at most $|V|-1$ (subtracting the vertex $L$).  

By the rules of our game, Left cannot move on $P$, since the vertex $R$ was once occupied by Right.  Likewise, Right cannot move onto $G$ since he must move to vertex $L$, and this was once occupied by Left.

If there is no hamiltonian path then Left can only visit at most $|V|-2$ of the vertices on $G$, meaning that Right is guaranteed to tie.  If there is a hamiltonian path, then Left will make the final move of the game and visit all $|V|-1$ vertices, while Right will only have visited $|V|-2$ vertices.

Therefore, Left can win moving first, if and only if, there is a hamiltonian path on $G$ and the theorem is proven.

\begin{figure}[htb]
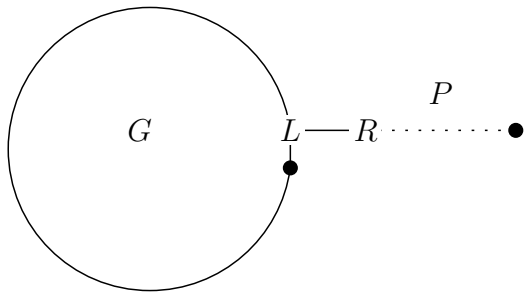

\begin{center}
\begin{graph}(7,4)

\roundnode{1}(4,2)\roundnode{4}(4,1.5)\roundnode{2}(5,2)\roundnode{3}(7,2)

\bow{1}{4}{-7.5}\edge{1}{4}\edge{1}{2}\edge{2}{3}[\graphlinedash{1 4}]

\nodetext{1}{$L$}\nodetext{2}{$R$}

\freetext(2,2){$G$}\freetext(6,2.5){$P$}

\end{graph}
\end{center}
\caption{Reduction from Hamiltonian Path.}
\end{figure}

\end{proof}

The problem Hamiltonian path remains NP-complete if $G$ is planar, cubic, 3-connected, or has no face with fewer than 5 edges \cite{gjt}.  It also remains NP-complete if $G$ is bipartite \cite{mk}, or a grid graph \cite{ips}.

\begin{definition}
A grid graph is the graph whose vertices correspond to the points in the plane with integer coordinates, $x$-coordinates being in the range $1,..., n$, $y$-coordinates being in the range $1,..., m$, and two vertices are connected by an edge whenever the corresponding points are at distance 1.
\end{definition}

\begin{theorem}
Pirates and Treasure is NP-hard if $G$ is either a planar or, a grid graph.
\end{theorem}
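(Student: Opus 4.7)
The plan is to reuse the reduction of Theorem \ref{red} and to verify that the constructed graph stays in the required class for each restriction.

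For the planar case: given a planar graph $G$, fix an embedding with $L$ on the outer face (always achievable by rechoosing the incident face as the outer face). The attached path $P$ can then be drawn in the outer face, emanating from $L$ and extending outward without crossings. The combined graph is planar, so since Hamiltonian Path is NP-hard on planar graphs \cite{gjt}, the same reduction yields NP-hardness of Pirates and Treasure on planar graphs.

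For the grid-graph case the subtlety is that grid graphs have edges exactly between unit-distance pairs, so the attached path must not create extra adjacencies. The plan is to apply the reduction with $L$ chosen to be a vertex of $G$ with minimum $x$-coordinate (a boundary vertex of the ambient grid). Translate $G$ so that $L = (1, k)$ and $G \subseteq \{(x, y) : x \ge 1\}$, and then place the remaining $|V| - 1$ vertices of the path $P$ at $(0, k), (-1, k), \ldots, (-(|V| - 2), k)$ with $R = (0, k)$; finally shift all coordinates to be positive.

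The geometric check is routine: every vertex of $G$ has $x$-coordinate at least $1$, so its Euclidean distance to $(-i, k)$ for $i \ge 1$ is at least $\sqrt{2}$, and the only vertex of $G$ at distance exactly $1$ from $(0, k)$ is $L$ itself. Hence the only added edges are those of the path $P$ together with the edge from $L$ to $(0,k)$, exactly reproducing the construction of Theorem \ref{red}. The main obstacle is ensuring the input variant of Hamiltonian Path is NP-hard in this restricted form, namely on grid graphs with a specified boundary start vertex; this is a routine consequence of \cite{ips} (either by choosing the start to be the leftmost vertex and using the standard observation that one may assume uniqueness of the leftmost vertex by minor padding, or by iterating the reduction over candidate starts). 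Once this is in hand, the geometric placement above and the argument of Theorem \ref{red} complete the proof.
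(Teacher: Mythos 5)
Your proposal is correct and follows essentially the same route as the paper: reuse the reduction of Theorem \ref{red} and check that attaching the path $P$ keeps the graph in the restricted class. The paper simply asserts this for both cases, while you additionally supply the geometric placement of $P$ for the grid-graph case and flag the specified-start-vertex issue handled via \cite{ips}; these are careful elaborations of the same argument, not a different approach.
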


\begin{proof}  The proof of this is almost identical to theorem \ref{red}.  Adding a path to the outer face of a planar graph, is still a planar graph.  Likewise for a grid-graph.  Therefore, we can use the same reduction method, and the theorem is proven.
\end{proof}

For the remaining types of graph, we must use a different proof technique.  The reason is that if you add a path, to say, a cubic graph, then the resulting graph is no longer cubic.  If we want to say that the game remains NP-hard for cubic graphs, say, then the graph we use for our reduction must also be cubic.  So we make the following conjecture.

\begin{conjecture}
Pirates and Treasure is NP-hard if $G$ is either cubic, or 3-connected.
\end{conjecture}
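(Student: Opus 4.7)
The plan is to follow the same overall strategy as Theorem \ref{red} --- reduction from Hamiltonian Path, which remains NP-complete for cubic and for 3-connected graphs \cite{gjt} --- but to replace the pendant path $P$ by a gadget that already lies in the desired class. The author's observation that simply appending a path destroys cubic-ness or 3-connectivity means that the whole graph $G$, including both the ``Left region'' and the ``Right region'', must be constructed inside the target class from the outset. A convenient way to do this is to exploit the flexibility already present in the problem definition: since the weights $w(v)$ may be arbitrary positive integers, we can decouple the counting argument of Theorem \ref{red} from the exact sizes of the two regions by giving one region's vertices a very large weight $M$ and the other region's vertices weight $1$.

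Concretely, starting from a cubic instance $H$ of Hamiltonian Path, I would subdivide one edge of $H$ twice to introduce two new degree-two vertices $L$ and $R$ (with $LR$ an edge). I would then glue on a small cubic ``confinement gadget'' $Q$ --- for instance a prism-like ladder capped at each end by a triangle --- so that after identification every vertex of the combined graph has degree exactly $3$ and the starting vertices $L$ and $R$ sit at corresponding corners of $Q$. Every vertex inherited from $H$ would receive weight $M$ and every vertex of $Q$ weight $1$, with $M$ chosen large enough that any single visit of Right into the $H$-part (or of Left into $Q$) is dominated by the race on $H$ itself. Provided $Q$ is engineered so that Right, starting at his corner, can only traverse the $Q$-side and cannot usefully enter $H$ before $L$ blocks the only crossing, the same dichotomy as in Theorem \ref{red} applies: Left wins moving first if and only if there is a Hamiltonian path in $H$.

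The 3-connected case would be handled analogously, replacing the cubic ladder $Q$ by a 3-connected cubic gadget and performing the attachment so that $G$ itself is 3-connected across the boundary. The principal obstacle --- and presumably the reason the author records this only as a conjecture --- is precisely the gadget design: one must choose $Q$ so that (i) every vertex of $G$ has the prescribed degree, (ii) in the 3-connected case $G$ is genuinely 3-connected and not merely assembled from 3-connected pieces, and (iii) from Right's starting vertex there is a tight, provable upper bound on how many weighted points Right can accumulate under optimal play, no matter how Left tries to interact with $Q$. Condition (iii) requires a nontrivial game-theoretic analysis of play restricted to the gadget, which genuinely goes beyond the simple vertex-count argument used in Theorem \ref{red}, and producing such a gadget simultaneously satisfying (i)--(iii) in the 3-connected setting strikes me as the main technical hurdle standing between the conjecture and a full proof.
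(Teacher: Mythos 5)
The paper does not prove this statement at all: it is stated as a conjecture precisely because the author could not see how to keep the reduction graph inside the target class, so there is no proof of record to compare you against. Your submission must therefore be judged as a standalone proof, and as you yourself concede in the final paragraph, it is not one --- the confinement gadget $Q$ is never actually constructed, and the key property (a provable bound on Right's attainable score under optimal play) is deferred. That alone makes this a proof plan rather than a proof.

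Beyond the acknowledged incompleteness, two of the concrete choices you describe would fail as stated. First, the big-$M$ weighting undermines the equivalence rather than securing it: the correctness of Theorem \ref{red} rests on an exact count with all weights equal (Right is guaranteed $|V|-2$ points, so Left must visit all $|V|-1$ remaining vertices to win), whereas in your scheme, if Right really is confined to $Q$ and collects only weight-$1$ vertices, then Left wins by visiting a single $M$-vertex whether or not $H$ has a Hamiltonian path, so ``Left wins moving first'' no longer encodes Hamiltonicity. Second, the confinement requirement is in direct tension with 3-connectivity: if $L$ can ``block the only crossing,'' then the crossing's two endpoints form a cut set of size at most two and $G$ is not 3-connected; making $G$ 3-connected forces at least three vertex-disjoint connections between the $H$-side and the $Q$-side, which means Right can enter $H$ and the simple parity/count dichotomy of Theorem \ref{red} no longer applies without a genuinely new game-theoretic argument. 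So the gap is not merely that details are missing --- the two central design decisions you commit to (big-$M$ weights plus a single blocked crossing) are mutually inconsistent with the conclusion you need, and repairing them is exactly the open problem the paper records.
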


Since it is unlikely that this game is in NP, and this game is clearly in PSpace, we also make this conjecture.

\begin{conjecture}
Pirates and Treasure is PSpace-complete.
\end{conjecture}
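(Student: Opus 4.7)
The plan has two parts: membership in PSpace (routine) and PSpace-hardness (the substantive step). For the upper bound, any position of Pirates and Treasure is encoded by the two pirates' current vertices, the set of already-visited vertices, and the running score difference, all in space polynomial in the size of the input graph. Every move strictly enlarges the visited set, so the game tree has depth at most $|V|$, and a standard recursive minimax evaluation requires only polynomial space; this places the problem in PSpace.

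For the lower bound, I would reduce from Generalized Geography (GG), which is known to be PSpace-complete on directed graphs. Given a GG instance $(D, v_0)$, I would construct a Pirates and Treasure graph consisting of two loosely coupled components: an \emph{arena}, obtained by replacing each directed edge of $D$ with a small gadget that forces traversal in the intended direction via appropriately chosen positive treasure values, on which Left's pirate simulates the GG token starting at a vertex adjacent to the copy of $v_0$; and a \emph{counter}, a long path or tree on which Right's pirate is essentially forced to walk in a unique pre-determined manner. Treasure values are calibrated so that Left's score on any play is a strictly increasing function of the number of moves Left can extend the simulated token, while Right's score from the counter is a fixed quantity equal to the threshold Left must beat. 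Left wins the P\&T instance if and only if Left can force sufficiently many moves in the arena, which happens if and only if the first player wins on $(D, v_0)$.

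The principal obstacle is that Pirates and Treasure has both players moving on every turn, whereas in GG only one player acts per turn. The reduction must therefore make Right's parallel moves either forced or inconsequential. The intended mechanism is to design the counter so that any deviation from its unique non-losing route costs Right enough treasure to change the overall outcome regardless of what happens in the arena, and to make any attempt by Left to cross into the counter, or by Right to cross into the arena, immediately unprofitable via the direction-enforcing gadgets at the join. Proving that no subtle cross-component interaction lets either player escape the simulation -- for instance, by Left abandoning the arena early to punish Right on the counter, or by Right stalling in order to force Left into a dead end -- is the delicate part and will require a careful case analysis reminiscent of classical PSpace-hardness reductions for combinatorial games. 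Should the two-coin alternation issue prove too rigid for a clean arena/counter split, a back-up plan is to reduce from a genuinely two-sided game such as Node Kayles or a partisan Geography variant, exploiting the disjunctive-sum decomposition of Pirates and Treasure established earlier to build independent variable and clause gadgets whose sum encodes the target problem.
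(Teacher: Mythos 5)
This statement is left as a conjecture in the paper --- the author offers no proof, only the remark that the game is ``clearly in PSpace'' and unlikely to be in NP --- so there is nothing in the paper to compare your hardness argument against. Your membership argument is fine and matches the paper's (correct) informal claim: positions are described by the pirates' locations, the visited set, whose turn it is, and the score, the game lasts at most $|V|$ moves because every move visits a new vertex, and minimax evaluation runs in polynomial space.

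The hardness half, however, has a genuine structural flaw, beyond merely being a sketch whose gadgets and case analysis are deferred. In your arena/counter design, Right's pirate is ``essentially forced'' along the counter and all decisions about the simulated Geography token are made by Left alone. But the PSpace-hardness of Generalized Geography comes precisely from the two players alternately choosing how to extend the \emph{same} token path; if only Left steers the token, the arena question collapses to ``can Left trace a sufficiently long path in the arena,'' i.e.\ a longest-path/Hamiltonian-path style existence question. That is an NP question, and indeed it is essentially the reduction the paper already uses for Theorem 8 (NP-hardness). So a reduction in which Right's moves are inconsequential cannot certify PSpace-hardness: you would be encoding only an existential quantifier, not the $\exists\forall\exists\cdots$ alternation of a PSpace-hard game. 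To make this work, Right's choices must genuinely constrain Left inside the gadgetry --- for example, both pirates interacting in the arena so that Right's pirate can block vertices Left needs, or Right's position gating which branches of a quantifier gadget remain open --- which is exactly the alternation-transfer problem you flag as ``the delicate part'' but do not solve; your back-up plan (Node Kayles or partisan Geography via disjunctive sums) faces the same unsolved issue. As it stands the conjecture remains open, and your proposal re-derives only what the paper already proves.
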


\section{The Game in General}

Due to the fact that scoring games are not as nicely behaved as normal play games, we must devise a new technique for studying them.  The technique that we propose is to restrict our set to only include those scoring games that represent a position of the game we wish to analyse.

In some cases this makes the problem much simpler, as was shown by Fraser Stewart in his paper on impartial scoring play games \cite{fs2}.  In this paper he looked only at the set of impartial games, and in doing was able to devise a general strategy for solving any scoring play octal game.  Here we will attempt to do the same thing, but for Pirates and Treasure.

The most obvious question to ask is, ``will playing greedily always work?''.  The answer to that is ``no'', as demonstrated in the following example.

\begin{example}  Consider the game in figure \ref{ex1}.

\begin{figure}[htb]
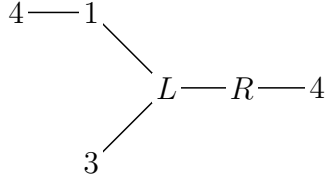

\begin{center}
\begin{graph}(4,2)

\roundnode{1}(0,2)\roundnode{2}(1,2)\roundnode{3}(2,1)
\roundnode{4}(1,0)\roundnode{5}(3,1)\roundnode{6}(4,1)

\edge{1}{2}\edge{2}{3}\edge{3}{4}
\edge{3}{5}\edge{5}{6}

\nodetext{1}{4}\nodetext{2}{1}\nodetext{3}{$L$}
\nodetext{4}{3}\nodetext{5}{$R$}\nodetext{6}{4}

\end{graph}
\end{center}
\caption{Playing greedily is not the best strategy.}\label{ex1}
\end{figure}

If Left were playing greedily he would move to the neighbour with value 3.  However, if he does so Right will move and get 4 points and therefore Left will lose.  Left's best strategy is to move to the neighbour with value 1, Right still moves and gets 4 points, but then Left can move again, get 4 points and win.  So playing greedily certainly does not always work.

\end{example}

\begin{definition}
$\mathcal{PT}=\{G|G\hbox{ represents a position of Pirates and Treasure}\}$
\end{definition}

\begin{theorem}
For all $G\in\mathcal{PT}$, if $G\not\cong 0$ then $G\neq 0$.
\end{theorem}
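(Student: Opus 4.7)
The plan is to exhibit, for each $G\in\mathcal{PT}$ with $G\not\cong 0$, a scoring game $X$ for which $G\plus X$ and $X$ lie in different outcome classes; by the definition of equality of games this gives $G\neq 0$. The argument splits on whether $G$ has any options.

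If $G^L=G^R=\emptyset$, then $G\not\cong 0$ forces $G^S\neq 0$, so the standalone game $G$ lies in $\lef$ or $\ri$ according to the sign of $G^S$, whereas $0\in\ti$. Taking $X=0$, the outcomes of $G\plus 0\approx G$ and $0\plus 0\approx 0$ already differ and we are done.

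Assume now that $G$ has at least one option. Using the symmetry $G\mapsto -G$, which exchanges $\lef$ and $\ri$ and fixes the other outcome classes, I reduce to the case $G^L\neq\emptyset$; the opposite case follows by applying the result to $-G$ and transferring through $X\mapsto -X$. Let $W$ be the sum of all treasure values appearing in the underlying graph of $G$; this is a positive integer, and one easily verifies $|G^{SL}_F|\leq W$. The construction is then to take
\[
X = \{.|1|-N\}
\]
with $N$ an integer fixed larger than $2W$. A direct computation gives $X^{SL}_F=1$ and $X^{SR}_F=-N$, so $X\in L_>\cap R_<=\n$.

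The crux is to show that $G\plus X\in\ri$. With Left to move first on $G\plus X$, Left has no option on $X$ and is forced to play on $G$; Right can then either continue on $G$ or commit to the $X$-move. The key observation is that Right's $X$-move contributes a fixed $-(N+1)$ to the final score regardless of when it is played, while every other contribution to the final score comes from play on $G$ and is bounded in absolute value by $W$. For $N>2W$ Right therefore strictly prefers to take the $X$-penalty, and the resulting final score is at most $W-N<0$; a parallel analysis with Right moving first yields $(G\plus X)^{SR}_F\leq G^{SL}_F-N<0$. Hence $G\plus X\in L_<\cap R_<=\ri$, a different outcome class from $X\in\n$, so $G\neq 0$. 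The main obstacle is verifying that the additive-separability bound holds uniformly across all branches of the game tree — the subtle point being that Right skipping the $X$-move would grant Left an effective extra turn on $G$, but any such gain is itself bounded by $W$ and therefore dominated by the choice of $N$.
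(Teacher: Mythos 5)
Your proof is correct and follows essentially the same route as the paper: you adjoin a component on which only Right can move, whose single move is worth more than all the treasure available in $G$, so that Left, forced to open on $G$, loses the sum while the added component on its own is not a Left loss. The only differences are refinements rather than a new idea---you give the component the strictly positive base score $1$, so that $X\in\n$ while $G\plus X\in\ri$ (a cleaner outcome-class separation than the paper's choice ``$a\geq 0$'', which at $a=0$ places both $P$ and $G\plus P$ in $\ri$), and you treat the optionless case and the Left/Right symmetry explicitly.
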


\begin{proof}  First let $P$ be a single edge, with one Right piece on it (note that $P=\{.|a|b\}$), and let Left move first on $G\plus P$, where $G$ is any graph that has at least one Left piece.  The case where Right moves first will follow by symmetry.  Note, $P^{SL}_F=a$, and we let $a\geq 0$.

We will label the vertices of $P$, $p_1$ and $p_2$.  We place the Right piece of $p_1$ and give $p_2$ a value that is larger than the sum of all the values on $G$.  Since $G$ is a finite graph, we can always do this.

Left moving first, must move on $G$, since he has no move on $P$.  Right simply moves to $p_2$ and wins.  Therefore $(G\plus P)^{SL}_F<0$, i.e. $G\plus P\not\approx P$ and the theorem is proven.
 
\end{proof}

This means that it is highly unlikely that we will be able to find any general technique for solving different variations of this game.

\subsection{Comparison With Normal Play}

In this section we are aiming to show that it is possible to find a variation of Pirates and Treasure that behaves very similarly to a normal play game.  That is, best strategy under normal play, corresponds to best strategy under scoring play.  First, table \ref{tabn} shows the sums of the four outcome classes under normal play.

\begin{table}[htb]
\begin{center}
\begin{tabular}{c|cccc}
$G+H$&$G\in\pre$&$G\in\lef$&$G\in\ri$&$G\in\n$\\\hline
$H\in\pre$&$\pre$&$\lef$&$\ri$&$\n$\\
$H\in\lef$&$\lef$&$\lef$&$\lef$, $\ri$, $\n$, $\pre$&$\lef$, $\n$\\
$H\in\ri$&$\ri$&$\lef$, $\ri$, $\n$, $\pre$&$\ri$&$\ri$, $\n$\\
$H\in\n$&$\n$&$\lef$, $\n$&$\ri$, $\n$&$\lef$, $\ri$, $\n$, $\pre$\\
\end{tabular}
\end{center}
\caption{Outcome Class Table for Normal Play Games}\label{tabn}
\end{table}

Our aim, is to find a variation of Pirates and Treasure that gives an outcome class which looks like that one.  By doing this, we are effectively demonstrating that there is a non-trivial subset of scoring play games that exhibit the same ``nice'' behaviour of normal play games.

First we define the games $\mathcal{PT}_x$.

\begin{definition}
$\mathcal{PT}_x$, is a subset of $\mathcal{PT}$, where every node on the graph has value $x\in\mathbb{R}$, $x>0$, and for all $G\in\mathcal{PT}_x$, $G^S=0$.
\end{definition}

What the following theorems will show is that $\mathcal{PT}_x$ exhibits behaviour that is almost identical to normal play.  In all diagrams $R$ represents a Right piece, and $L$ represents a Left piece.

\begin{theorem}\label{out}
If $G\in\mathcal{PT}_x$, then $G$ belongs to either $\lef$, $\ri$, $\n$ or $\ti$, i.e. there are no $\pre$ positions.
\end{theorem}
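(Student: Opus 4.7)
The plan is to prove the stronger statement that $L_<\cap \mathcal{PT}_x=\emptyset$; since $\pre=L_<\cap R_>$, this immediately gives the theorem. The whole argument is a parity observation about how the alternation of turns interacts with the fact that every vertex is worth the same positive value $x$.

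First I would unpack the data: for $G\in\mathcal{PT}_x$ we have $G^S=0$ and every visitable vertex is worth $x>0$, so the running score after any (legal, partial or complete) sequence of plays is exactly $x(m_L-m_R)$, where $m_L$ and $m_R$ are the numbers of Left-moves and Right-moves that have so far occurred. In particular the terminal score at the end of any play through $G$ has the form $x(m_L-m_R)$.

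Second I would argue by a direct counting that whenever Left moves first, $m_L-m_R\in\{0,1\}$ at the end of the play, independent of strategy. Indeed, moves strictly alternate $L,R,L,R,\ldots$, and by rule (5) the game halts the first time the player whose turn it is cannot move. If that happens on a Left-turn we have made an even total number of moves and $m_L=m_R$; if it happens on a Right-turn we have made an odd total number of moves and $m_L=m_R+1$. Either way the terminal score is in $\{0,x\}$, and therefore non-negative. Since this holds for every play consistent with the rules, it holds in particular under the optimal strategies defining $G_F^{SL}$, giving $G_F^{SL}\geq 0$. Thus $G\notin L_<$, and consequently $G\notin\pre$. (A completely symmetric argument yields $G_F^{SR}\leq 0$, so even $R_>\cap\mathcal{PT}_x=\emptyset$, which is a stronger emptiness than needed.)

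There is no serious obstacle in this plan; the only place to be slightly careful is that each side may control several ships, so I would explicitly note that ``a turn'' still means exactly one ship-move, which is what guarantees strict alternation and hence the parity count $|m_L-m_R|\leq 1$. Everything else is bookkeeping.
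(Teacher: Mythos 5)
Your proposal is correct and is essentially the paper's own argument in more careful clothing: the paper likewise observes that since every move is worth exactly $x$ to the mover and turns strictly alternate, the first player is always ahead by $0$ or $x$ at the end, so the second player can at best tie, ruling out $\pre$. Your explicit bookkeeping ($G^S=0$, terminal score $x(m_L-m_R)$ with $m_L-m_R\in\{0,1\}$ when Left starts, hence $G_F^{SL}\geq 0$ and symmetrically $G_F^{SR}\leq 0$) is just a sharper formalization of the same parity idea.
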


\begin{proof}  Whenever a player moves they gain $x$ points.  So the first player to move will have an $x$ point advantage over the second player.  Since all nodes have value $x$, the most the second player can do is bring the game back to a tie.  Therefore when the game ends, either the first player has won, or the game is a tie.  

For a position $G$ to be in $\pre$, the second player has to be able to win outright.  But, this is impossible, therefore there are no $\pre$ positions, and the theorem is proven.
\end{proof}

To show the similarities between this particular variation, and normal play, we will be looking at the outcome class table.  This is given in the following theorem.

\begin{theorem}\label{tab}  The outcome class table for $\mathcal{PT}_x$ is given as follows:
\begin{table}[htb]
\begin{center}
\begin{tabular}{c|cccc}
$G\plus H$&$G\in\ti$&$G\in\lef$&$G\in\ri$&$G\in\n$\\\hline
$H\in\ti$&$\ti$&$\lef$&$\ri$&$\n$\\
$H\in\lef$&$\lef$&$\lef$&$\lef$, $\ri$, $\n$, $\ti$&$\lef$, $\n$\\
$H\in\ri$&$\ri$&$\lef$, $\ri$, $\n$, $\ti$&$\ri$&$\ri$, $\n$\\
$H\in\n$&$\n$&$\lef$, $\n$&$\ri$, $\n$&$\lef$, $\ri$,$\n$, $\ti$\\
\end{tabular}
\end{center}
\caption{Outcome Class Table for $PT_x$}
\end{table}

\end{theorem}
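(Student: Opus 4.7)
The plan is to reduce Theorem~\ref{tab} to the normal-play outcome table (Table~\ref{tabn}) via a correspondence that sends $\ti$ to the normal-play class $\pre$ and fixes $\lef$, $\ri$, $\n$. Once that correspondence is established, the table for $\mathcal{PT}_x$ under $\plus$ is exactly Table~\ref{tabn} with $\pre$ relabelled as $\ti$.

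First I would observe that for every $G\in\mathcal{PT}_x$ each move changes the running score by exactly $+x$ if Left moves and $-x$ if Right moves, and this property is closed under the disjunctive sum $\plus$. Since moves alternate strictly, a play that starts with Left and consists of $\ell$ Left moves and $r$ Right moves satisfies $\ell-r\in\{0,1\}$, so the final score is $(\ell-r)x$. Hence $\gfl\in\{0,x\}$, with $\gfl=x$ precisely when Left can force himself to make the last move when he starts. Symmetrically $\gfr\in\{-x,0\}$, with $\gfr=0$ precisely when Left can force the last move even when Right starts. Unwinding the outcome-class definitions now gives: $G\in\lef$ iff Left wins the underlying normal-play game whoever moves first, $G\in\ri$ iff Right does, $G\in\n$ iff the first player does, and $G\in\ti$ iff the second player does. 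Because the $\pm x$ property is inherited by $G\plus H$ whenever $G,H\in\mathcal{PT}_x$, the same translation applies to sums, and the scoring outcome of $G\plus H$ coincides with its normal-play outcome under this relabelling.

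With the correspondence in hand, every cell of Theorem~\ref{tab} asserting a unique outcome follows at once from the corresponding cell of Table~\ref{tabn}. For the cells that list several possible outcomes, namely $\lef\plus\ri$, $\lef\plus\n$, $\ri\plus\n$, and $\n\plus\n$, I would exhibit small $\mathcal{PT}_x$ witnesses realizing each listed outcome; isolated vertices and single edges carrying appropriate Left/Right pieces suffice, and they translate directly from well-known normal-play witnesses.

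The main obstacle is the step in which scoring-optimal play is identified with normal-play-optimal play. This works here only because in $\mathcal{PT}_x$ the final score takes the three values $-x$, $0$, $x$ and is determined solely by which player makes the last move, so maximising or minimising the score is equivalent to forcing oneself to be the last mover. Outside of $\mathcal{PT}_x$ this equivalence typically fails, which is precisely why restricting to this variation is what recovers normal-play-like behaviour.
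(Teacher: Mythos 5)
Your proposal is correct in substance, but it takes a genuinely different route from the paper. You make explicit a correspondence lemma: in $\mathcal{PT}_x$, and in any disjunctive sum of such positions, every move shifts the score by exactly $+x$ or $-x$, so the final score lies in $\{-x,0,x\}$ and is determined solely by who moves last; hence $\gfl\in\{0,x\}$, $\gfr\in\{-x,0\}$, optimal play for both sides is ``secure the last move'', and the scoring outcome of any such sum is the normal-play outcome with $\pre$ relabelled as $\ti$. The deterministic cells of the table then follow by citing the normal-play table (Table \ref{tabn}), and only the multi-valued cells need witnesses. The paper never isolates this correspondence: it argues each case directly with strategy-following arguments (respond in whichever component the opponent just moved in, as in Cases 1 and 2; a direct first-player argument excluding $\ri$ and $\ti$ in Case 4) and supplies explicit example graphs for the ambiguous cells. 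Your reduction is cleaner and actually explains \emph{why} the table coincides with the normal-play one, at the cost of leaning on the standard normal-play sum theorems; the paper's proof is self-contained but more ad hoc. One detail in your plan does need repair: isolated vertices and single edges do \emph{not} suffice as witnesses. A single-edge component gives one player at most one free move and the other player none, and a disjoint union of such components has its last mover determined by simply counting the two players' free moves, so its outcome can only be $\lef$, $\ri$ or $\ti$, never $\n$. Consequently the witnesses for the cells involving $\n$ (for instance $\lef\plus\ri\in\n$ and the various realizations in the $\n\plus\n$ cell) must use components carrying both a Left and a Right piece, such as the three-vertex path $L$--$x$--$R$ or the branched graphs appearing in the paper's proof; with such witnesses substituted, your argument goes through.
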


\begin{proof}  The proof of this will be split into five cases, the remaining cases follow by symmetry.

\noindent \textbf{Case 1}: $G\in\mathcal{X}$,  $H\in\ti$ implies $G\plus H\in \mathcal{X}$, where $\mathcal{X}=\lef, \ri, \n$ or $\ti$.

Since $H\in\ti$, this implies that the second player to move, must be the last player to move.  First assume that Left wins moving first on $G$, since the case where Right wins moving first follows by symmetry.  Again, Left winning on $G$ implies that he also moves last on $G$.

So when they play $G\plus H$, Left will choose his winning move on $G$, and move to $G^L\plus H$.  If Right also moves on $G$, i.e. moves to $G^{LR}\plus H$, Left will respond on $G$.  When $G$ is over, they must play $H$, which ends in a tie, and therefore Left wins.  

If Right chooses to move to $G^L\plus H^R$, then Left will respond by moving to $G^L\plus H^{RL}$.  Since $H$ ends in a tie, we know that Left must move last on $H$, and therefore can force Right to move first on $G^L$, which he loses.  So if Left wins moving first on $G$, then Left wins moving first on $G\plus H$.

We know that neither Left nor Right can win moving second on any game $G$, therefore the final case to consider is Left ties moving second on $G$.  Left can tie $G\plus H$, by simply following Right's moves, i.e.  Left moves on the same component as Right.

Since Right cannot win moving first or either $G$ or $H$, Right will choose to move to $G^R\plus H$, otherwise he may give Left an opportunity to win.  If Left chooses to move to $G^R\plus H^L$, then Right can still force a tie by playing to $G^R\plus H^{LR}$.  In other words, Left cannot change the parity of $G\plus H$, and therefore the best he can do is tie moving second on $G\plus H$.

\noindent \textbf{Case 2}: $G$ and $H\in\lef$ implies $G\plus H\in\lef$.

Left playing first on $G\plus H$, simply makes his winning move on $G$ or $H$.  Then, whichever component Right chooses to move on, Left will also move on.  Since we know that Left can move last on both $G$ and $H$, he is guaranteed to keep his advantage over Right and therefore win, moving first.

When Right moves first, Left can at least tie by playing the same strategy as before (i.e. moving on the same component as Right).  Therefore $(G\plus H)^{SL}_F>0$ and $(G\plus H)^{SR}_F\geq 0$, and we conclude that $G\plus H\in\lef$.

\noindent \textbf{Case 3}: $G\in \lef$, $H\in\ri$ implies $G\plus H\in\mathcal{X}$, where $\mathcal{X}=\lef, \ri, \n$ or $\ti$.

\begin{center}
\begin{graph}(3,4)

\roundnode{1}(0,0)\roundnode{2}(0,1)\roundnode{3}(0,2)\roundnode{4}(0,3)\roundnode{5}(2,0)
\roundnode{6}(2,1)\roundnode{7}(2,2)\roundnode{8}(2,3)

\edge{1}{2}\edge{2}{3}\edge{3}{4}\edge{5}{6}
\edge{6}{7}\edge{7}{8}

\nodetext{1}{$x$}\nodetext{2}{$L$}\nodetext{3}{$x$}\nodetext{4}{$R$}\nodetext{5}{$x$}
\nodetext{6}{$R$}\nodetext{7}{$x$}\nodetext{8}{$L$}

\freetext(1,1.5){$\plus$}\freetext(3,1.5){$\in\ti$}

\end{graph}
\end{center}

\begin{center}
\begin{graph}(3,4)

\roundnode{1}(0,0)\roundnode{2}(0,1)\roundnode{3}(0,2)\roundnode{4}(0,3)\roundnode{5}(2,0)
\roundnode{6}(2,1)

\edge{1}{2}\edge{2}{3}\edge{3}{4}\edge{5}{6}

\nodetext{1}{$x$}\nodetext{2}{$L$}\nodetext{3}{$x$}\nodetext{4}{$R$}\nodetext{5}{$R$}
\nodetext{6}{$x$}

\freetext(1,1.5){$\plus$}\freetext(3,1.5){$\in\ri$}

\end{graph}
\end{center}

\begin{center}
\begin{graph}(5,3)

\roundnode{1}(1,0)\roundnode{2}(1,1)\roundnode{3}(1,2)\roundnode{4}(0,2)
\roundnode{5}(2,2)\roundnode{6}(4,0)\roundnode{7}(4,1)

\edge{1}{2}\edge{2}{3}\edge{3}{4}\edge{3}{5}\edge{6}{7}

\nodetext{1}{$L$}\nodetext{2}{$x$}\nodetext{3}{$x$}\nodetext{4}{$R$}
\nodetext{5}{$L$}\nodetext{6}{$R$}\nodetext{7}{$x$}

\freetext(2.5,1){$\plus$}\freetext(5,1){$\in\n$}

\end{graph}
\end{center}

The case $G\in\lef$, $H\in\ri$ implies $G\plus H\in\lef$ follows by symmetry.

\noindent \textbf{Case 4}: $G\in \lef$, $H\in\n$ implies $G\plus H\in\lef$ or $\n$.

If $G\in\lef$ and $H\in \n$, then Left moving first on $G\plus H$, can win by moving $G\plus H^L$.  The reason is that he will gain a 1 point advantage over Right, and whichever component Right moves on, Left will move on.  Since he moves last on both $G$ and $H$, playing this strategy will guarantee that he maintains his 1 point advantage over Right.

Since Left can always win moving first on $G\plus H$, whenever $G\in\lef$ and $H\in\n$, then we conclude that $G\plus H$ cannot be in $\ri$ or $\ti$.  To complete the proof, we simply give an example of $G\plus H\in\lef$ and $G\plus H\in\n$.

\begin{center}
\begin{graph}(3,3)

\roundnode{1}(2,0)\roundnode{2}(2,1)\roundnode{3}(2,2)
\roundnode{4}(0,0)\roundnode{5}(0,1)

\edge{1}{2}\edge{2}{3}\edge{4}{5}

\nodetext{1}{$L$}\nodetext{2}{$x$}\nodetext{3}{$R$}
\nodetext{4}{$L$}\nodetext{5}{$x$}

\freetext(1,1){$\plus$}\freetext(3,1){$\in\lef$}

\end{graph}
\end{center}

\begin{center}
\begin{graph}(5,4)

\roundnode{1}(0,0)\roundnode{2}(0,1)\roundnode{3}(0,2)\roundnode{4}(0,3)
\roundnode{5}(3,1)\roundnode{6}(3,2)\roundnode{7}(2,2)\roundnode{8}(4,2)

\edge{1}{2}\edge{2}{3}\edge{3}{4}
\edge{5}{6}\edge{6}{7}\edge{6}{8}

\nodetext{1}{$x$}\nodetext{2}{$L$}\nodetext{3}{$x$}\nodetext{4}{$R$}
\nodetext{5}{$x$}\nodetext{6}{$x$}\nodetext{7}{$L$}\nodetext{8}{$R$}

\freetext(1,1.5){$\plus$}\freetext(5,1.5){$\in\n$}

\end{graph}
\end{center}

\noindent \textbf{Case 5}: $G$ and $H\in\n$ implies $G\plus H\in\mathcal{X}$, where $\mathcal{X}=\lef, \ri, \n$ or $\ti$.

\begin{center}
\begin{graph}(5,3)

\roundnode{1}(1,0)\roundnode{2}(1,1)\roundnode{3}(1,2)\roundnode{4}(0,2)\roundnode{5}(2,2)
\roundnode{6}(2,0)\roundnode{7}(2,1)\roundnode{8}(4,0)\roundnode{9}(4,1)\roundnode{10}(4,2)

\edge{1}{2}\edge{2}{3}\edge{3}{4}\edge{3}{5}
\edge{6}{7}\edge{8}{9}\edge{9}{10}

\nodetext{1}{$L$}\nodetext{2}{$x$}\nodetext{3}{$x$}\nodetext{4}{$R$}\nodetext{5}{$L$}
\nodetext{6}{$R$}\nodetext{7}{$x$}\nodetext{8}{$L$}\nodetext{9}{$x$}\nodetext{10}{$R$}

\freetext(3,1){$\plus$}\freetext(5,1){$\in\ri$}

\end{graph}
\end{center}

\begin{center}
\begin{graph}(3,3)

\roundnode{1}(0,0)\roundnode{2}(0,1)\roundnode{3}(0,2)
\roundnode{4}(2,0)\roundnode{5}(2,1)\roundnode{6}(2,2)

\edge{1}{2}\edge{2}{3}\edge{4}{5}\edge{5}{6}

\nodetext{1}{$L$}\nodetext{2}{$x$}\nodetext{3}{$R$}
\nodetext{4}{$L$}\nodetext{5}{$x$}\nodetext{6}{$R$}

\freetext(1,1){$\plus$}\freetext(3,1){$\in\ti$}

\end{graph}
\end{center}

\begin{center}
\begin{graph}(7,3)

\roundnode{1}(1,0)\roundnode{2}(1,1)\roundnode{3}(1,2)\roundnode{4}(0,2)\roundnode{5}(2,2)
\roundnode{6}(5,1)\roundnode{7}(5,2)\roundnode{8}(4,2)\roundnode{9}(6,2)

\edge{1}{2}\edge{2}{3}\edge{3}{4}\edge{3}{5}
\edge{6}{7}\edge{7}{8}\edge{7}{9}

\nodetext{1}{$x$}\nodetext{2}{$x$}\nodetext{3}{$x$}\nodetext{4}{$L$}\nodetext{5}{$R$}
\nodetext{6}{$x$}\nodetext{7}{$x$}\nodetext{8}{$L$}\nodetext{9}{$R$}

\freetext(3,1){$\plus$}\freetext(7,1){$\in\n$}

\end{graph}
\end{center}

The case $G\in\lef$, $H\in\ri$ implies $G\plus H\in\lef$ follows by symmetry.  All remaining cases follow by symmetry, and therefore the theorem is proven.

\end{proof}

\begin{theorem}
$G\plus (-G)\in\ti$ for all $G\in \mathcal{PT}_x$.
\end{theorem}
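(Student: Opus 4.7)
The plan is to reduce to the outcome class classification by showing that $G\plus(-G)$ is invariant under negation, and then combining this with Theorem~\ref{out} and a copying strategy to force membership in $\ti$.

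First I would note that $-(-G)=G$ and that negation distributes over the disjunctive sum, so $-(G\plus(-G))=(-G)\plus G=G\plus(-G)$; the game is its own negative. Using the standard identities $(-H)^{SL}_F=-H^{SR}_F$ and $(-H)^{SR}_F=-H^{SL}_F$, one checks that negation interchanges $\lef$ with $\ri$ while fixing each of $\n$, $\pre$, and $\ti$. Because $\lef\cap\ri=\emptyset$, a self-negating game cannot lie in $\lef\cup\ri$. Observing that $G\plus(-G)$ belongs to $\mathcal{PT}_x$ (its underlying graph is the disjoint union of the two graphs, again with every vertex of value $x$ and total score $0$), Theorem~\ref{out} then eliminates $\pre$, and so $G\plus(-G)\in\n\cup\ti$.

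Second, I would rule out $\n$ by giving Right a copying (Tweedledee--Tweedledum) strategy when Left moves first. Because $G$ and $-G$ share the same graph and vertex values, and differ only in which player owns each piece, every Left move in one component has a natural mirror Right move in the other: move the oppositely coloured piece sitting at the same vertex to the same destination. Right simply responds to each Left move with this mirror move. An easy induction on the number of moves shows the mirror move is always legal: before each Left turn the visited vertex sets of the two components coincide, so whenever Left moves to some $w$ in one component, the mirror piece in the other component is at the matching source vertex and $w$ is still unvisited there. Each (Left move, Right mirror) pair contributes $+x$ from Left's move and $+x$ from Right's mirror, leaving the cumulative score unchanged, so the game must terminate with score $0$. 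This gives $(G\plus(-G))^{SL}_F\leq 0$, which is incompatible with $\n$ (where $(G\plus(-G))^{SL}_F>0$). Hence $G\plus(-G)\in\ti$.

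The only technical point is the invariance argument for the mirror move, but this is straightforward bookkeeping once one records that mutual mirroring keeps the visited sets of $G$ and $-G$ in lock-step. The rest is pure outcome-class manipulation driven by the self-negation identity.
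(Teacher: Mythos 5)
Your proof is correct and takes essentially the same route as the paper: Right's tweedle-dum--tweedle-dee mirroring strategy bounds the first player's final score by $0$, and Theorem \ref{out} is invoked to exclude the remaining outcomes. Your explicit self-negation bookkeeping (negation swaps $\lef$ and $\ri$ and fixes $\n$, $\pre$, $\ti$) is just a more careful packaging of the paper's symmetry step and its terse claim that Right cannot win moving second, so the mathematical content coincides.
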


\begin{proof}    To prove this, first consider Left moving first, since Right moving first will follow by symmetry.  Right can tie $G\plus (-G)$, simply by playing the ``tweedle-dum, tweedle-dee'' strategy, i.e. whichever move Left makes, Right makes the identical move in the opposite component.  Since this will allow Right to move last on $G\plus (-G)$, and therefore tie the game.

We know that Right cannot win moving second, because from theorem \ref{out}, there are no $\pre$ positions.  Therefore Right's strategy can guarantee him a tie, and $G\plus (-G)\in\ti$.  The theorem is proven.

\end{proof}

The natural question to ask is, ``are the sets $\mathcal{PT}_x$ groups?''.  The answer to that is ``no''.  The reason is that the games $G^L$ are $G^R$ are not in the set.  We demand that every game $G\in\mathcal{PT}_x$ have $G^S=0$.  But, $g^{LS}=x$ and $g^{RS}=-x$, for all $g^L\in G^L$ and $g^R\in G^R$.  So when a player moves, he is moving to something outside of the set.

However, what we have shown is that this particular variant of Pirates and Treasures behaves very similarly to a normal play game.  In fact the similarity goes further than that.  What we will attempt to convince the reader, although we have no proof of this, is that a winning a strategy for Pirates and Treasure under normal play, is identical to a ``non-losing'' strategy of a game in $\mathcal{PT}_x$.

Consider the game in figure \ref{half}.  If we played it under normal play (i.e. last player to move wins), this game has value $\{-1,0|1\}=\{0|1\}=\frac{1}{2}$.  Now consider the game in figure \ref{add}.  Under normal play Right's best move, moving first, is to move the Right piece on the left hand graph down adjacent to the Left piece.  This is exactly the same for scoring play.  Likewise, Left's best move --moving first-- is to move his piece up, so it is adjacent to the Right piece, under both normal and scoring play.

\begin{figure}[htb]
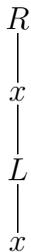

\begin{center}
\begin{graph}(0,3)

\roundnode{1}(0,0)\roundnode{2}(0,1)\roundnode{3}(0,2)\roundnode{4}(0,3)

\edge{1}{2}\edge{2}{3}\edge{3}{4}

\nodetext{1}{$x$}\nodetext{2}{$L$}\nodetext{3}{$x$}\nodetext{4}{$R$}

\end{graph}
\end{center}
\caption{One Half?}\label{half}
\end{figure}

\begin{figure}[htb]
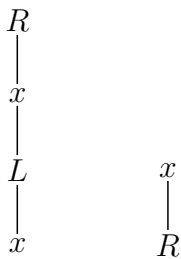

\begin{center}
\begin{graph}(2,3)

\roundnode{1}(0,0)\roundnode{2}(0,1)\roundnode{3}(0,2)\roundnode{4}(0,3)
\roundnode{5}(2,0)\roundnode{6}(2,1)

\edge{1}{2}\edge{2}{3}\edge{3}{4}
\edge{5}{6}

\nodetext{1}{$x$}\nodetext{2}{$L$}\nodetext{3}{$x$}\nodetext{4}{$R$}
\nodetext{5}{$R$}\nodetext{6}{$x$}

\end{graph}
\end{center}
\caption{An $\ri$ position, under normal and scoring play.}\label{add}
\end{figure}

In fact, if the reader is particularly vigilant, and checks the best strategies in the examples given in the proof of theorem \ref{tab}, he will find that they are exactly the same as the best strategies under normal play.  So what we have shown is that there is a non-trivial subset of scoring play games, that behaves very similarly to normal play games.

\subsubsection*{A Few Notes}

There are several things we need to note about this idea.  The first is that this will not work on the set $\mathcal{PT}_x\cup\mathcal{PT}_y$, where $x\neq y$.  It will also not work if $G^S\neq 0$, i.e. before the players have moved the score is something other than zero.  Finally, if we let $x=0$, then this game is trivial since every position is a $\ti$ position.

The reader may ask ``but didn't you just change it to last to move wins?''.  The answer to that is certainly ``no''.  We did not change the rules of the game, all we did was looked at a particular case of the game.  The real question is ``are there any other scoring games that fall into this set?''.  So we leave the following open problem.

\begin{problem}
Can you define, and classify the set of scoring play games that behave like a normal play game?  If yes, which games lie in this set?
\end{problem}

\subsection{Comparison With Mis\`ere Play}

In this paper we do not intend to say much about a comparison with mis\`ere play.  All we really be doing is giving examples to show that there are variations where the winning strategy under scoring play is identical to the winning strategy under mis\`ere play.   

To show that it is very similar to mis\`ere play is much harder, given that the general structure of mis\`ere games is not as ``nice'' as normal play.  All we will be doing is demonstrating that the last player to move does not win, i.e. he loses or ties.  So, therefore both players are trying \emph{not} to move last, just like a mis\`ere play game.

\begin{definition}
$\mathcal{PT}_{-x}$, is a subset of $\mathcal{PT}$, where every node on the graph has value $x\in\mathbb{R}$, $x>0$, and for all $G\in\mathcal{PT}_{-x}$, $G^S=0$.
\end{definition}

\begin{theorem}\label{outm}
If $G\in\mathcal{PT}_{-x}$, then $G$ belongs to either $\lef$, $\ri$, $\pre$ or $\ti$, i.e. there are no $\n$ positions.
\end{theorem}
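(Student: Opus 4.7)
The plan is to establish the stronger statement that whenever Left moves first on $G\in\mathcal{PT}_{-x}$ the final score is at most $0$, which immediately rules out membership in $L_>$, hence in $\n=L_>\cap R_<$. The argument is a pure counting/parity observation: in $\mathcal{PT}_{-x}$ every node has the same value $-x$, so each move decreases the mover's point total by exactly $x$. Thus if a play of the game (with Left moving first) consists of $\ell$ Left moves followed by $r$ Right moves, the terminal score is $-x\ell-(-xr)=x(r-\ell)$.

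Next I would use the alternation of play. Since Left moves first and the players strictly alternate, either Right makes the final move and $\ell=r$, giving score $0$, or Left makes the final move and $\ell=r+1$, giving score $-x<0$. Either way the score is $\leq 0$, regardless of the strategies chosen. Taking the maximum over Left's strategies (and minimum over Right's) therefore yields $\gfl\leq 0$, so $G\in L_\leq=L_=\cup L_<$, i.e.\ $G\notin L_>$.

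By the symmetric argument with the roles of Left and Right swapped, when Right moves first, every play ends with score $\geq 0$, so $\gfr\geq 0$, and $G\notin R_<$. Since $\n=L_>\cap R_<$ and $G$ fails to lie in either of these sets, $G\notin\n$, and the only remaining outcome classes are $\lef$, $\ri$, $\pre$ and $\ti$.

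I do not foresee any real obstacle; the content of the theorem is essentially the observation that the mover is always at a disadvantage in $\mathcal{PT}_{-x}$, which is the mis\`ere-style analogue of Theorem \ref{out}. The only thing worth flagging is that the parity argument requires no assumption on the graph other than finiteness (which is built into the definition of $\mathcal{PT}$), so the proof goes through uniformly for all $G\in\mathcal{PT}_{-x}$.
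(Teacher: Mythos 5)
Your proposal is correct and is essentially the paper's own argument: both rest on the observation that every move costs the mover $x$ points, so the score at any terminal position depends only on which player moved last (a tie if the second player did, $-x$ for the first player otherwise), whence the first player can never win outright and no $\n$ position exists. Your version just makes the move-count bookkeeping explicit (and the phrase ``$\ell$ Left moves followed by $r$ Right moves'' should read ``$\ell$ Left moves and $r$ Right moves in alternation,'' which is what your parity step actually uses), but this is the same approach.
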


\begin{proof}  The proof of this is very similar to the proof given for theorem \ref{out}.  When a player moves he loses $x$ points, i.e. the second player has an $x$ point advantage.  Since every time the second player moves he brings the game back to a tie, if the second player moves last the game will end in a tie.  Otherwise, the first player loses.

For the game to have be an $\n$ position the first player must be able to win outright, but this is impossible.  Therefore, there are no $\n$ positions and the theorem is proven.
\end{proof}

\begin{conjecture}
For all $G\in\mathcal{PT}_{-x}$, if $G\not\cong 0$ then $G\neq 0$.
\end{conjecture}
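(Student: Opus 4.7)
The plan is to case-split on the outcome class of $G$. By Theorem~\ref{outm} $G$ must lie in $\lef$, $\ri$, $\pre$, or $\ti$. In each of the first three cases we take $X=0$: then $G\plus 0$ has outcome in $\{\lef,\ri,\pre\}$ while $0\in\ti$, so $G\plus 0\not\approx 0$ and hence $G\neq 0$. The essential case is therefore $G\in\ti$ with $G\not\cong 0$. A useful preliminary observation is that this combination forces both $G^L$ and $G^R$ to be non-empty: if, say, $G^L=\emptyset$ and $G^R\neq\emptyset$, then Right's first move costs Right $x$ points, giving $G^{SR}_F\geq x>0$ and contradicting $G\in\ti$. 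So without loss of generality $G$ has at least one legal Left move.

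For the $\ti$ case I propose a ``Right tempo gadget'' $X$: the Pirates and Treasure position consisting of a single edge with a Right piece on one endpoint $p_1$ and a value-$0$ treasure on the other endpoint $p_2$. Directly, $X^{SL}_F=0$ (Left has no move on $X$) and $X^{SR}_F=0$ (Right's sole move collects a zero-valued vertex), so $X\in\ti$. The key estimate is $(G\plus X)^{SL}_F\leq -x$, which places $G\plus X$ in $L_<$ and hence outside $\ti$, so that $G\plus X\not\approx X$ and therefore $G\neq 0$. To establish the estimate, consider Right's strategy of responding to Left's first move $L_1$ on $G$ by immediately playing the $X$ move. The score is then $-x$ and the remaining game is exactly $G^{L_1}$ with Left to move. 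Because every move in $\mathcal{PT}_{-x}$ shifts the score by exactly $\pm x$ and the players alternate strictly with no pass option, a short parity argument constrains the final score to $(G^{L_1})^{SL}_F\in\{(G^{L_1})^S,\,(G^{L_1})^S-x\}=\{-x,-2x\}$, so $(G^{L_1})^{SL}_F\leq -x$ for every choice of $L_1$. Since the $X$-tempo is one of Right's options, $(G\plus X)^{SL}_F\leq\max_{L_1}(G^{L_1})^{SL}_F\leq -x$.

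The main obstacle I anticipate is the parity argument itself. Although its statement is clean --- the set of possible final scores from a Left-first play collapses to two values determined by the parity of the total move count --- it must be verified uniformly across all positions $G^{L_1}$, including those in which some mid-game player runs out of moves and thereby terminates play earlier than expected. Once this parity fact is in hand the remainder of the argument is bookkeeping: the tempo gadget $X$ is itself a $\ti$-position, yet Right's one-move ``free pass'' is enough to force Left into an unmatched extra move on $G$, whose $\mathcal{PT}_{-x}$ cost of $x$ cannot be recovered. I expect this to crystallize into a clean $\mathcal{PT}_{-x}$-specific lemma, essentially immediate once one notes that all scores along any line of play live on the lattice $x\mathbb{Z}$.
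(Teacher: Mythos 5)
First, be aware that the paper does not actually prove this statement: it is left as an open conjecture, and the author explains why --- the difficulty is that a distinguishing game has to be found among positions of the game being analysed, and Pirates and Treasure admits no positions whose game trees have the shape used in the mis\`ere argument of Mesdal--Ottaway. This matters for your proof. If ``$G\neq 0$'' were read with $X$ ranging over arbitrary scoring games (or arbitrary members of $\mathcal{PT}$), the conjecture would already be an immediate corollary of the theorem proved earlier in the paper (for all $G\in\mathcal{PT}$, $G\not\cong 0$ implies $G\neq 0$, via a single edge carrying a Right piece and a huge-valued free vertex), and the author would not call it ``very difficult'' and beyond the paper's scope. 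The intended content is that $G$ can be separated from $0$ \emph{inside the set under study}, i.e.\ by some $X\in\mathcal{PT}_{-x}$. Your tempo gadget --- an edge with a Right piece and a value-$0$ vertex --- is not such a position: in $\mathcal{PT}_{-x}$ every vertex carries the value $-x<0$ (the paper treats the value-$0$ game as a separate, trivial variant, and explicitly warns that mixing values, as in $\mathcal{PT}_{-x}\cup\mathcal{PT}_{-y}$, destroys the analysis). Your reduction to the $\ti$ case and the parity estimate $(G\plus X)^{SL}_F\leq -x$ are fine as far as they go, but the argument distinguishes $G$ from $0$ only by stepping outside the universe in which the conjecture lives; in effect it re-proves a special case of the earlier $\mathcal{PT}$ theorem rather than the conjecture.

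Second, the gap is not closed by the obvious in-set substitute, so this is a missing idea rather than a presentational slip. The honest analogue of your gadget is $X'$, a single edge with a Right piece and a $-x$ vertex; then $X'^{SL}_F=0$ and $X'^{SR}_F=x$, so $X'\in L_=\cap R_>\subseteq\lef$. Now take $G$ to be one Left tempo edge plus one Right tempo edge (each a single edge with one piece and one $-x$ vertex): $G\in\ti$ and $G\not\cong 0$, yet a direct check gives $(G\plus X')^{SL}_F=0$ and $(G\plus X')^{SR}_F=x$, so $G\plus X'\in\lef$ as well and $X'$ fails to distinguish this $G$ from $0$. Whether \emph{any} $X\in\mathcal{PT}_{-x}$ separates such $\ti$-positions from $0$ is precisely the obstruction the author describes (no Pirates and Treasure position realises the mis\`ere-style distinguishing tree, since a player who cannot move at the start can never move later). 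A correct proof of the conjecture would need a genuinely new construction within $\mathcal{PT}_{-x}$ for the $\ti$ case; tightening the parity bookkeeping will not supply it.
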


The reason we make this a conjecture rather than proving it, is because a proof is very difficult.  There is a good reason for that.  In \cite{gamo}, the authors used the game $G$ in figure \ref{mis1} to prove the same theorem for mis\`ere games.

\begin{figure}[htb]
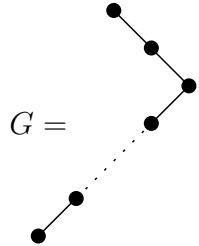

\begin{center}
\begin{graph}(2,3)

\roundnode{1}(0,0)\roundnode{2}(0.5,.5)\roundnode{3}(1.5,1.5)
\roundnode{4}(2,2)\roundnode{5}(1.5,2.5)\roundnode{6}(1,3)

\edge{1}{2}\edge{2}{3}[\graphlinedash{1 4}]\edge{3}{4}
\edge{4}{5}\edge{5}{6}

\freetext(0,1.5){$G=$}

\end{graph}
\end{center}
\caption{Game to show $G\neq 0$ under mis\`ere rules.}\label{mis1}
\end{figure}

The idea is that if we play $G+H$, where $H^L\neq\emptyset$, then Right moving first will move on $G$.  Left has to move on $H$, as he has no move on $G^R$, Right moves again on $G$.  We then let the string on Left moves on $G$ be longer than the depth of $H$, and Left will be forced to move last.

The problem is that there is no position in Pirates and Treasure, with a game tree that has that general shape.  Either a player can move from the start of the game, or he cannot move at all.  This makes proving the conjecture considerably more challenging, and it will be beyond the scope of this paper.

We will also not be examining the outcome class table for these games either.  As far as a comparison to mis\`ere play, it is known that outcome of any two games $G$ and $H$ are not related to their sum under mis\`ere play.  Therefore, there is really very little that the outcome class table would tell us.

If were were playing this game under mis\`ere rules, not every position is represented.  We know that moving last is bad, so showing that the outcome class table is also ``bad'' is not particularly interesting.  So, this is why we will not be looking at it in this paper.

To finish this section we will simply look at an example, and show that the winning strategy under scoring play corresponds to the winning strategy under mis\`ere play, and conjecture that this is always the case for these variations.  Again, this is not true if $G^S\neq 0$, or if we examine a set $\mathcal{PT}_{-x}\cup\mathcal{PT}_{-y}$, where $x\neq y$.

\begin{figure}[htb]
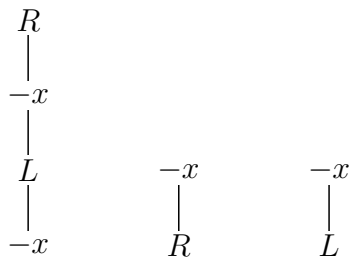

\begin{center}
\begin{graph}(4,3)

\roundnode{1}(0,0)\roundnode{2}(0,1)\roundnode{3}(0,2)\roundnode{4}(0,3)
\roundnode{5}(2,0)\roundnode{6}(2,1)\roundnode{7}(4,0)\roundnode{8}(4,1)

\edge{1}{2}\edge{2}{3}\edge{3}{4}
\edge{5}{6}\edge{7}{8}

\nodetext{1}{$-x$}\nodetext{2}{$L$}\nodetext{3}{$-x$}\nodetext{4}{$R$}
\nodetext{5}{$R$}\nodetext{6}{$-x$}\nodetext{7}{$L$}\nodetext{8}{$-x$}

\end{graph}
\end{center}
\caption{An $\ti$ position under mis\`ere play.}\label{mis}
\end{figure}

Consider the game in figure \ref{mis}.  If we played this game under mis\`ere rules, i.e. last player to move loses, then Left's best move --moving first-- would be to slide his piece, on the left hand graph, down to the lower vertex.  This is exactly the same as his best move under scoring play rules.  Likewise Right's best move, under mis\`ere rules is either to move his piece on the left hand graph down, or move his piece in the center graph up.  Again, these correspond to best strategy under scoring rules.

Also note that the difference in playing this game with negative values and positive values on the vertices, is very similar to the difference between playing it under mis\`ere and normal play rules.

We finish with one further problem:

\begin{problem}
If there is a variation of a scoring game that behaves like a normal play game, will using negative values for the points players gain or lose on their turns give a game that behaves like a mis\`ere play?  If no, can you classify which games have this property, and which do no?
\end{problem}

\section{Conclusion}

We have introduced a new game, and shown that there are variations of this game exhibits behaviour that is very similar to normal play, and variations that exhibit behaviour that is very similar to mis\`ere play.  This means that there are non-trivial subsets of scoring play games that have this same behaviour.  The question is can we define these subsets precisely, and determine which scoring games lie in them, and which do not?  We hope that we have convinced the reader that this is an interesting problem to pursue.

\end{document}